\newtheorem{theorem}{Theorem}
\newtheorem{lemma}{Lemma}
\newtheorem*{theorema} {Theorem A (Wolff)}
\begin{document}

\title [Wolff's Ideal Theorem]
{Wolff's Ideal Theorem on  $Q_p$ Spaces}
\author{Debendra P. Banjade}
\address{Department of Mathematics and Statistics\\
         Coastal Carolina University\\
         Box 261954\\
         Conway, SC  29528-6054\\
         (843)349-6569}
\email{dpbanjade@coastal.edu}
\subjclass[2010]{30H50, 32A37, 46E15, 46J20}
\keywords{corona theorem, Wolff's theorem, $Q_p$ spaces}
\thanks{Research supported in a part by \lq \lq Professional Activities Travel Grant" from Coastal Carolina University}
\begin{abstract}
For $p\in(0,1),$ let $Q_p$ space be the space of all analytic functions on the unit disk $\mathbb{D}$ such that $\vert f'(z) \vert ^2 (1-\vert z\vert ^2)^p dA(z)$ is a $p$ - Carleson measure. In this paper, we prove that the Wolff's Ideal Theorem on $H^\infty{(\mathbb{D})}$ can be extended to the Banach algebra $H^{\infty}(\mathbb{D})\cap Q_{p}$,  and also to the multiplier algebra on $Q_p$ spaces.
\end{abstract}
\maketitle
\section{Introduction}

For $p\geq{0}$, let $Q_p$ be the space of all analytic functions on the unit disk $\mathbb{D}$ with the norm\\
$$\left\vert\left\vert f \right\vert\right\vert_{Q_{p}}^2:=\int _{-\pi}^\pi \vert f\vert^2 d\sigma+\underset{a\in \mathbb{D}}{\sup}\int_\mathbb{D}\vert f'(z)\vert^2(1-\vert \varphi_{a}(z)\vert^2)^p dA(z)\ < \infty,$$
 where $\varphi_{a}(z)=\frac{a-z}{1-\bar{a}z}$ is a M$\ddot{o}$bius map. It is well known that $Q_{0}=\mathcal{D}$, is the classical Dirichlet space and $Q_{1}=BMOA=BMO( \mathbb{T})\cap H^{2}(\mathbb{D}).$\\ 
The case we are interested in is $p\in (0,1)$.\\

Let $\mathcal{M}(Q_{p})$ be the multiplier algebra of $Q_p$ spaces which we define,  $$\mathcal{M}(Q_{p}):=\left\{ \phi\in Q_{p} :\, M_{\phi}(f)=\phi f\in Q_p\; \text{ for all } f\in Q_{p}\right\}.$$ 
We know from[X1] that $\mathcal{M}(Q_{p})\subseteq H^{\infty}(\mathbb{D}) \cap Q_p$.\\

 In 1962, Carleson [C] proved his famous \lq \lq Corona theorem" characterizing when a finitely generated ideal in
$H^{\infty}(\mathbb{D})$ is actually all of $H^{\infty}(\mathbb{D})$.  Independently, Rosenblum [R], Tolokonnikov [To], and
Uchiyama gave an infinite version of Carleson's work on $H^{\infty}(\mathbb{D})$. In 1997, Nicolau and Xiao [NX] proved that the corona theorem holds for the Banach algebra $H^{\infty}(\mathbb{D})\cap Q_{p}$ and later Xiao [X1] gave a necessary and sufficient condition for the solvability of the corona theorem on $Q_p$ spaces whereas a similar result on  $\mathcal{M}(Q_p)$ was established by Pau [P]. \\

In light of the corona theorem it is natural to ask whether the corona kind of result still holds if we replace the uniform lower bound by any $H^\infty(\mathbb{D})$ function.  Namely, let $f_1, f_2, ...,f_n$ be $H^\infty$ functions, and suppose $g\in\text{H}^\infty$  satisfies 
\maketitle
\begin{equation}
\vert g(z)\vert\leq \vert f_1(z)\vert+...\vert f_n(z)\vert \;\; \text{ for all } \; z\in \mathbb{D}.
\end{equation}\\
Then the question is whether (1) always implies $g\in\mathcal{I}(f_1,...f_n)$, the ideal generated by $\{f_{j}\}_{j=1}^n$. Unfortunately, the answer is no (see [G, p. 369] for an example given by Rao).\\

However, T. Wolff in [G] has proved the following version to show that (1) implies $g^3\in\mathcal{I}(f_1,...f_n).$

\begin{theorema}
If
\begin{align}
\{f_j\}_{j=1}^n \subset H^{\infty}(\mathbb{D}), g \in H^{\infty}(\mathbb{D}) \quad \text{and}\notag\\
|g(z)| \le \left( \sum_{j=1}^n \, |f_j(z)|^2\right)^{\frac 1{2}} \;\; \text{for all } \, z \in \mathbb{D},
\end{align}
then
\[
g^3 \in \mathcal{I} ( \{ f_j\}_{j=1}^n),
\]

\medskip \noindent
the ideal generated by $\{f_j\}_{j=1}^n$ in $H^{\infty}(\mathbb{D})$.
\end{theorema}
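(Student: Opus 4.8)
The plan is to establish Theorem A by the classical argument: reduce the membership $g^3 \in \mathcal{I}(\{f_j\}_{j=1}^n)$ to solving a $\bar{\partial}$-equation with Carleson-measure data, via a Koszul-complex correction. Put $F(z) = \sum_{j=1}^n |f_j(z)|^2$, so that hypothesis (2) becomes $|g|^2 \le F$ on $\mathbb{D}$. On $\{F > 0\}$ set $\psi_j = \bar f_j / F$; then $\sum_j f_j \psi_j \equiv 1$, and since each $f_j$ is holomorphic, $\sum_j f_j\,\bar{\partial}\psi_j \equiv 0$. One looks for a holomorphic solution of $\sum_j f_j h_j = g^3$ of the form
\[
h_j = g^3 \psi_j + \sum_{k=1}^n b_{jk} f_k, \qquad b_{jk} = -b_{kj},
\]
with the $b_{jk}$ to be chosen. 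Antisymmetry of $(b_{jk})$ kills the double sum, so $\sum_j f_j h_j = g^3 \sum_j f_j \psi_j = g^3$ regardless; the only requirement is $\bar{\partial} h_j = 0$.

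Since $g$ and the $f_k$ are holomorphic, $\bar{\partial} h_j = g^3\,\bar{\partial}\psi_j + \sum_k f_k\,\bar{\partial} b_{jk}$. Using $\sum_k f_k\,\bar{\partial}\psi_k = 0$ and $\sum_k f_k \psi_k = 1$ one gets the identity $-g^3\,\bar{\partial}\psi_j = \sum_k f_k \cdot g^3\big(\psi_j\,\bar{\partial}\psi_k - \psi_k\,\bar{\partial}\psi_j\big)$, so it suffices to solve, for $j < k$,
\[
\bar{\partial} b_{jk} = v_{jk} := g^3 \big(\psi_j\,\bar{\partial}\psi_k - \psi_k\,\bar{\partial}\psi_j\big),
\]
and put $b_{kj} = -b_{jk}$, $b_{jj} = 0$. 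If this is done with $b_{jk} \in L^\infty(\mathbb{D})$, then each $h_j$ is holomorphic and bounded — note $|g^3\psi_j| = |g|^3|f_j|/F \le F \le \sum_j \|f_j\|_\infty^2$ by $|g|^2 \le F$ — so $h_j \in H^\infty(\mathbb{D})$ and $g^3 = \sum_j f_j h_j \in \mathcal{I}(\{f_j\}_{j=1}^n)$, as desired.

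The crux is to verify that the data $v_{jk}$ meets the hypotheses of a solvability theorem for $\bar{\partial}$ with bounded solution (Wolff's $\bar{\partial}$-lemma in [G], of which Theorem B below is the $Q_p$ counterpart), and this is exactly where the exponent $3$ is used. From $\bar{\partial}\psi_k = \overline{f_k'}/F - \bar f_k\,\bar{\partial}F/F^2$ and $|\bar{\partial}F| \le \sum_m |f_m||f_m'| \le F^{1/2}\big(\sum_m |f_m'|^2\big)^{1/2}$, combined with $|f_j f_k| \le F$ and $|g|^3 \le F^{3/2}$, one finds that every term of $v_{jk}$ is $O\big((\sum_m |f_m'|^2)^{1/2}\big)$; the powers of $F$ cancel precisely because $|g|^3 / F^{3/2} \le 1$, whereas $g$ or $g^2$ in place of $g^3$ would leave a negative power of $F$, uncontrolled near the zero set of $F$. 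Hence $|v_{jk}(z)|^2 (1-|z|^2)\,dA(z) \lesssim \sum_m |f_m'(z)|^2(1-|z|^2)\,dA(z)$, a Carleson measure since each $f_m \in H^\infty(\mathbb{D})$ (Littlewood--Paley), and a Cauchy--Schwarz estimate over Carleson boxes likewise handles the accompanying $L^1$-type condition on $|v_{jk}|(1-|z|^2)\,dA$. Invoking the $\bar{\partial}$-solvability theorem then yields $b_{jk} \in L^\infty(\mathbb{D})$ with norm controlled by $\max_j \|f_j\|_\infty$, which completes the argument.

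I expect the main obstacle to be the Carleson-measure bookkeeping in the last step: pinning down the precise admissibility conditions on $v_{jk}$ and checking them, together with the (minor) technicality that $v_{jk}$, a priori defined only on $\{F>0\}$, extends smoothly to all of $\mathbb{D}$ — which is harmless because the factor $g^3$ forces $v_{jk}$ to vanish to high order along the common zero set of the $f_j$. Everything else is formal manipulation of the Koszul data.
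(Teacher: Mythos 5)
Your Koszul-complex setup is the standard one and is essentially the route the paper takes for its $Q_p$ analogue (Theorem 1): the antisymmetrized data, the identity $\bar{\partial}h_j=0$, and the computation showing that the exponent $3$ exactly cancels the powers of $F=\sum|f_l|^2$ so that $|v_{jk}|\lesssim\bigl(\sum_m|f_m'|^2\bigr)^{1/2}$ are all correct. The gap is in the final, and hardest, step: passing from the quadratic Carleson condition on $|v_{jk}|^2(1-|z|^2)\,dA$ to a solution of $\bar{\partial}b_{jk}=v_{jk}$ that is bounded on $\mathbb{T}$. Your claim that ``a Cauchy--Schwarz estimate over Carleson boxes likewise handles the accompanying $L^1$-type condition'' fails precisely at the exponent $p=1$ relevant to Theorem A: Cauchy--Schwarz gives
$\int_{S(I)}|v|\,dA\le\bigl(\int_{S(I)}|v|^2(1-|z|^2)\,dA\bigr)^{1/2}\bigl(\int_{S(I)}(1-|z|^2)^{-1}dA\bigr)^{1/2}$,
and the second factor diverges logarithmically, so the $L^1$ Carleson condition needed for Jones's explicit solution formula is \emph{not} available. (This is exactly the dividing line the paper exploits: its Lemma 3 performs this Cauchy--Schwarz step, but only for $p<1$, where $\int_{S(I)}(1-|z|^2)^{-p}dA\approx|I|^{2-p}$; that is why the $Q_p$ case, $0<p<1$, is in this respect easier than the classical case.) Nor does the quadratic condition alone imply $L^\infty(\mathbb{T})$-solvability of $\bar{\partial}$ --- by itself it only yields solutions with BMO boundary values (Varopoulos); if it sufficed, the corona theorem itself would be an easy Littlewood--Paley exercise.

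Consequently, for Theorem A you must use Wolff's $\bar{\partial}$-lemma, which requires a \emph{second} Carleson condition on the derivative of the data, roughly that $|\partial v_{jk}/\partial z|\log\frac{1}{|z|}\,dA$ be a Carleson measure; verifying this for the Koszul data (using $|\bar{\partial}\psi_k|\le|g|(\sum|f'_l|^2)^{1/2}/F$ so that the product $|g|^2|\bar{\partial}\psi_j||\bar{\partial}\psi_k|\le\sum_l|f'_l|^2$, together with Green's-theorem/subharmonicity estimates for the remaining terms) is the substantive content of Wolff's proof and is entirely absent from your outline. So the proposal correctly reproduces the formal skeleton but omits, and in fact mis-states, the analytic core of the argument. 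Note also that the solvability theorems produce $b_{jk}$ bounded on $\mathbb{T}$ (not on $\mathbb{D}$); one then concludes $h_j\in H^\infty(\mathbb{D})$ from holomorphy plus bounded boundary values, a minor point but worth stating.
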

It is also known that (1) is not sufficient for $g^2$  to be in $\mathcal{I} ( \{ f_j\}_{j=1}^n)$ (see Treil [T1]).

\medskip
We have proved  in [BT1] and [BT2] that Wolff's theorem can be extended to the multiplier algebras of Dirichlet and weighted Dirichlet spaces. This paper is devoted to the extension of Wolff's theorem to the Banach algebra $H^{\infty}(\mathbb{D})\cap Q_{p}$ and also to the  multiplier algebra of $Q_p$ spaces. In Dirichlet spaces, using complete Nevanlinna pick kernels, the authors used Hilbert space version directly and then applied abstract operator theory result to establish  the theorem.  But, for  $Q_p$ spaces,  we are unable to use those Hilbert space techniques because these are only Banach spaces for $p\in(0,1)$.  To overcome this difficulty, we will apply  $\bar{\partial}$ - method and some $p$-Carleson measures for  $Q_p$ spaces.\\

\begin{theorem}
Let $g$, $\{f_j\}_{j=1}^n \subset H^{\infty}(\mathbb{D}) \cap Q_p$.  Assume that
\begin{equation}
|g(z)| \le \sqrt{\sum_{j=1}^{n}\vert f_j(z)\vert ^2}  \;\; \;      \text{for all} \; \;  {\text{z}}\in\mathbb{D}.
\end{equation} 
Then there exist $\{g_j\}_{j=1}^{n} \subset H^{\infty}(\mathbb{D}) \cap Q_p$ such that
\begin{equation}
\sum_{j=1}^{n}{f_j(z)g_j(z)}=g^3(z)    \;\;\;  \text{for all} \;\;  z\in \mathbb{D}.
\end{equation} 

\end{theorem}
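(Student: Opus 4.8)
The plan is to run T.\ Wolff's original $\bar\partial$-argument, replacing $H^\infty$ Carleson measures at each step by the appropriate blend of (ordinary) Carleson and $p$-Carleson measures, and to close the scheme inside $H^\infty(\mathbb{D})\cap Q_p$ by invoking Theorem B. If $f_1\equiv\cdots\equiv f_n\equiv 0$, then $(3)$ forces $g\equiv 0$ and $(4)$ holds trivially, so assume $F:=\sum_{j=1}^n|f_j|^2\not\equiv 0$; its zero set is then discrete and the functions written below extend across it with the regularity we use (equivalently, one works with $F+\varepsilon$ and lets $\varepsilon\downarrow 0$). Put $\varphi_j:=g^3\bar f_j/F$, so that $\sum_{j=1}^n f_j\varphi_j=g^3$ on $\mathbb{D}$ and, using $|g|^3\le|g|\,F$ and $|f_j|\le F^{1/2}$, $|\varphi_j|\le\|g\|_\infty\|f_j\|_\infty$. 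The $\varphi_j$ are not holomorphic, so for an antisymmetric array $(c_{jk})$ of smooth functions ($c_{kj}=-c_{jk}$, $c_{jj}=0$) set
\[
g_j:=\varphi_j-\sum_{k=1}^n c_{jk}f_k .
\]
Antisymmetry of $(c_{jk})$ against the symmetry of $f_jf_k$ gives $\sum_{j=1}^n f_jg_j=\sum_{j=1}^n f_j\varphi_j=g^3$ for \emph{any} choice of the $c_{jk}$; the whole problem is to choose them so that each $g_j$ is holomorphic, bounded, and in $Q_p$.

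A computation with $\bar\partial g=0$, $\bar\partial\bar f_k=\overline{f_k'}$ and $\bar\partial F=\sum_k f_k\overline{f_k'}$ gives
\[
\bar\partial\varphi_j=\frac{g^3}{F^2}\sum_{k=1}^n f_k\bigl(\overline{f_j'}\,\bar f_k-\bar f_j\,\overline{f_k'}\bigr),
\]
so it is enough to solve, for $j<k$, the $\bar\partial$-equations
\[
\bar\partial c_{jk}=\frac{g^3}{F^2}\bigl(\overline{f_j'}\,\bar f_k-\bar f_j\,\overline{f_k'}\bigr),
\]
for then $\bar\partial g_j=\bar\partial\varphi_j-\sum_k f_k\,\bar\partial c_{jk}=0$. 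Here hypothesis $(3)$ enters through the pointwise estimate
\[
|\bar\partial c_{jk}|\le\frac{|g|^3}{F^2}\bigl(|f_j'|\,|f_k|+|f_j|\,|f_k'|\bigr)\le\frac{F^{3/2}}{F^2}\,F^{1/2}\bigl(|f_j'|+|f_k'|\bigr)=|f_j'|+|f_k'|\le\sum_{m=1}^n|f_m'| ,
\]
which uses only $|g|^2\le F$ and $|f_m|\le F^{1/2}$. From this single estimate all the Carleson and $p$-Carleson conditions on the datum $v=\bar\partial c_{jk}$ that are hypothesized in Theorem B follow: (i) $|v|^2(1-|z|^2)^p\,dA\lesssim\sum_m|f_m'|^2(1-|z|^2)^p\,dA$ is a $p$-Carleson measure because each $f_m\in Q_p$; (ii) $|v|^2(1-|z|^2)\,dA$ is a Carleson measure, since on a Carleson box $S(I)$ one has $(1-|z|^2)^{1-p}\lesssim|I|^{1-p}$, whence $\int_{S(I)}|f_m'|^2(1-|z|^2)\,dA\lesssim|I|^{1-p}\int_{S(I)}|f_m'|^2(1-|z|^2)^p\,dA\lesssim|I|$; and (iii) $|v|(1-|z|^2)\,dA$ is a Carleson measure, since $|f_m'(z)|(1-|z|^2)\le 2\|f_m\|_\infty$ gives $\int_{S(I)}|f_m'|(1-|z|^2)\,dA\lesssim\|f_m\|_\infty|I|^2\le\|f_m\|_\infty|I|$. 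Theorem B then produces $c_{jk}$ (extended antisymmetrically) solving these $\bar\partial$-equations, bounded, and with $|\partial c_{jk}(z)|^2(1-|\varphi_a(z)|^2)^p\,dA(z)$ a $p$-Carleson measure uniformly in $a\in\mathbb{D}$.

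With these $c_{jk}$, the function $g_j=\varphi_j-\sum_k c_{jk}f_k$ is holomorphic by construction and bounded because $\varphi_j$, the $c_{jk}$ and the $f_k$ all are. Since $g_j$ is holomorphic, $g_j'=\partial g_j=\partial\varphi_j-\sum_k(\partial c_{jk})f_k-\sum_k c_{jk}f_k'$, and $|g_j'|^2(1-|\varphi_a|^2)^p\,dA$ is a $p$-Carleson measure: the $c_{jk}f_k'$ terms because $f_k\in Q_p$ and $c_{jk}\in H^\infty(\mathbb{D})$; the $(\partial c_{jk})f_k$ terms by the conclusion of Theorem B; and the $\partial\varphi_j$ term by the elementary bound $|\partial\varphi_j|\lesssim|g'|+\sum_k|f_k'|$ (again from $|g|^2\le F$, $|f_k|\le F^{1/2}$) together with $g,f_k\in Q_p$. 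Hence $\{g_j\}_{j=1}^n\subset H^\infty(\mathbb{D})\cap Q_p$ and $\sum_{j=1}^n f_jg_j=g^3$, which is $(4)$. The one genuinely delicate point is the second step: isolating the inequality $|\bar\partial c_{jk}|\le\sum_m|f_m'|$ and verifying that it simultaneously controls the $L^\infty$ side (conditions (ii)--(iii)) and the $Q_p$ side (condition (i)) of the hypotheses of Theorem B. As in Wolff's proof, this is exactly what forces the exponent $3$ on $g$: with $g$ or $g^2$ in place of $g^3$ the right-hand side of the pointwise estimate acquires a factor $F^{-1}$ or $F^{-1/2}$ and is no longer dominated by $\sum_m|f_m'|$.
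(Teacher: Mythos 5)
Your argument is correct and follows essentially the same route as the paper: Wolff's $\bar\partial$-scheme with the antisymmetric correction $c_{jk}$, the pointwise bound $|\bar\partial c_{jk}|\le\sum_m|f_m'|$ forced by the exponent $3$ via the size condition, the observation that $f_m\in Q_p$ makes the datum's square a $p$-Carleson measure against $(1-|z|^2)^p\,dA$, and an appeal to the Nicolau--Xiao $\bar\partial$-solvability result (the paper's Lemma 7) to close the construction in $H^\infty(\mathbb{D})\cap Q_p$. The only cosmetic differences are that the paper writes $g^2\psi_j$ with $\psi_j=g\bar f_j/F$ in place of your $\varphi_j=g^3\bar f_j/F$ and verifies $g_j\in Q_p$ through the boundary characterization $Q_p=Q_p(\mathbb{T})\cap H^2$ rather than through interior estimates on $g_j'$.
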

If $p\geq 1$, the Banach algebra $H^{\infty}(\mathbb{D}) \cap Q_p$ is just $H^{\infty}(\mathbb{D})$. Then Theorem 1 definitely holds true, which was the result proved by Wolff in  Theorem A.\\

Before stating our next theorem, it's worthwhile to note that  for $0<p<1$, a Blaschke product B is in $\mathcal{M}({Q_p})$ if and only if it is a finite Blaschke product (see [P]). It simply suggests us that the same Rao's example serves for the counter example in  $\mathcal{M}({Q_p})$ as in $H^\infty(\mathbb{D})$. So (1) is not sufficient for $g\in\mathcal{I}(f_1 ,...f_n)$ in $\mathcal{M}(Q_p)$.

\begin{theorem}
Let $g$, $\{f_j\}_{j=1}^n \subset \mathcal{M}(Q_p)$.  Assume that
\begin{equation}
|g(z)| \le \sqrt{\sum_{j=1}^{n}\vert f_j(z)\vert ^2}  \;\; \;      \text{for all} \; \;  {\text{z}}\in\mathbb{D}.
\end{equation}

Then there exist $\{g_j\}_{j=1}^{n} \subset \mathcal{M}(Q_p)$ such that
\begin{equation}
\sum_{j=1}^{n}{f_j(z)g_j(z)}=g^3(z)    \;\;\;  \text{for all} \;\;  z\in \mathbb{D}.
\end{equation} 

\end{theorem}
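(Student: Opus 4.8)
\emph{Strategy.} The plan is to run Wolff's $\bar\partial$-construction and then promote its estimates from $H^{\infty}(\mathbb{D})$ to $\mathcal{M}(Q_p)$ by feeding suitably weighted $p$-Carleson data into the solvability theorem for the $\bar\partial$-equation (Theorem~B), as announced in the Introduction. Write $F=\sum_{j=1}^{n}|f_j|^{2}$ and, on $\{F>0\}$, set
\[
\varphi_j=\frac{g^{2}\bar f_j}{F},\qquad u_{jk}=g^{3}\,\frac{\bar f_k}{F}\,\partial_{\bar z}\!\Bigl(\frac{\bar f_j}{F}\Bigr)\qquad(1\le j,k\le n).
\]
By hypothesis (5), $g$ vanishes at each common zero of the $f_j$ in $\mathbb{D}$ to at least the order of that zero, so a local factorization shows $\varphi_j,u_{jk}\in C^{\infty}(\mathbb{D})$: the powers of $g$ in $u_{jk}$ absorb the singularities of $1/F$, and no perturbation of the data is needed. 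Let $b_{jk}$ solve $\partial_{\bar z}b_{jk}=u_{jk}$, set $c_{jk}=b_{jk}-b_{kj}$, and define
\[
g_j=g\,\varphi_j-\sum_{k=1}^{n}c_{jk}f_k=\frac{g^{3}\bar f_j}{F}-\sum_{k=1}^{n}c_{jk}f_k .
\]
Using $\sum_k f_k\bar f_k=F$, the identity $\sum_k f_k\,\partial_{\bar z}(\bar f_k/F)=\partial_{\bar z}\bigl(\sum_k f_k\bar f_k/F\bigr)=\partial_{\bar z}(1)=0$, and $c_{jk}=-c_{kj}$, a direct computation gives $\partial_{\bar z}g_j=g^{3}\partial_{\bar z}(\bar f_j/F)-\sum_k f_k\,\partial_{\bar z}c_{jk}=0$ and $\sum_j f_j g_j=g\sum_j f_j\varphi_j-\sum_{j,k}c_{jk}f_jf_k=g\cdot g^{2}=g^{3}$. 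Thus every $g_j$ is holomorphic and (6) holds; the entire problem reduces to choosing the $b_{jk}$ so that $g_j\in\mathcal{M}(Q_p)$.

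\emph{Reduction to $\bar\partial$-estimates.} I would read $\mathcal{M}(Q_p)$-membership off the characterization from [P], [X1]: $\phi\in\mathcal{M}(Q_p)$ iff $\phi\in H^{\infty}(\mathbb{D})$ and $|\phi'(z)|^{2}(1-|z|^{2})^{p}\bigl(\log\tfrac{2}{1-|z|^{2}}\bigr)^{2}dA(z)$ is a bounded $p$-Carleson measure. Since $g_j$ is holomorphic and, by the displayed formula, pointwise bounded on $\mathbb{D}$ as soon as the $c_{jk}$ are (indeed $|g^{3}\bar f_j/F|\le F\le\|f\|_{\infty}^{2}$ because $|g|\le F^{1/2}$ and $|f_j|\le F^{1/2}$), it suffices to arrange that each $b_{jk}$ is bounded and that $\partial_z g_j$ generates such a measure. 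Differentiating the smooth formula for $g_j$, every resulting term is controlled by $|g'|$, $|f'|:=(\sum_k|f_k'|^{2})^{1/2}$ and $\sup_{\mathbb{D}}|c_{jk}|$ \emph{except} the terms $\sum_k(\partial_z c_{jk})f_k$; those are precisely why Theorem~B must produce a solution $b$ of $\partial_{\bar z}b=u$ whose \emph{full} gradient — not merely $\partial_{\bar z}b=u$ — generates a logarithmic $p$-Carleson measure. So I would apply Theorem~B in the form: \emph{if $|u(z)|(1-|z|^{2})\,dA(z)$ is a Carleson measure and $|u(z)|^{2}(1-|z|^{2})^{p}(\log\tfrac{2}{1-|z|^{2}})^{2}dA(z)$ is a bounded $p$-Carleson measure, with norms controlled by $\|f_j\|_{\mathcal{M}(Q_p)}$ and $\|g\|_{\infty}$, then $\partial_{\bar z}b=u$ admits a solution $b$ that is bounded and satisfies the same control on $|\partial_z b(z)|^{2}(1-|z|^{2})^{p}(\log\tfrac{2}{1-|z|^{2}})^{2}dA(z)$.}

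\emph{Verifying the data.} From $\partial_{\bar z}F=\sum_k f_k\overline{f_k'}$ and the Cauchy--Schwarz inequality $|\partial_{\bar z}F|\le F^{1/2}|f'|$, one gets $|\partial_{\bar z}(\bar f_j/F)|\le|f_j'|/F+|f_j||f'|/F^{3/2}\lesssim|f'|/F$. Using $|f_k|\le F^{1/2}$ and then $|g|\le F^{1/2}$,
\[
|u_{jk}|\ \lesssim\ \frac{|g|^{3}|f'|}{F^{3/2}}\ =\ \sqrt{\frac{|g|^{4}}{F^{2}}\cdot\frac{|g|^{2}|f'|^{2}}{F}}\ \le\ \frac12\Bigl(\frac{|g|^{4}}{F^{2}}+\frac{|g|^{2}|f'|^{2}}{F}\Bigr)\ \le\ \frac12\bigl(1+|f'|^{2}\bigr),
\]
and directly $|u_{jk}|^{2}\lesssim|g|^{6}|f'|^{2}/F^{3}=(|g|^{4}/F^{2})(|g|^{2}|f'|^{2}/F)\le|f'|^{2}$. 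Hence $|u_{jk}|(1-|z|^{2})\,dA$ is dominated by $(1-|z|^{2})\,dA+\sum_k|f_k'|^{2}(1-|z|^{2})\,dA$, and $|u_{jk}|^{2}(1-|z|^{2})^{p}(\log\tfrac{2}{1-|z|^{2}})^{2}dA$ by $\sum_k|f_k'|^{2}(1-|z|^{2})^{p}(\log\tfrac{2}{1-|z|^{2}})^{2}dA$; since each $f_k\in\mathcal{M}(Q_p)$ (so in particular $f_k\in H^{\infty}\subset BMOA$), all of these are Carleson, respectively bounded logarithmic $p$-Carleson, measures with the required norm control. Feeding this into Theorem~B yields bounded $b_{jk}$ obeying the $\partial_z$-estimate, hence $g_j\in\mathcal{M}(Q_p)$, which finishes the argument. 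The same reasoning, with ``$p$-Carleson'' in place of ``logarithmic $p$-Carleson'' throughout, proves Theorem~1.

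\emph{Main obstacle.} The real work is concentrated in Theorem~B: solving $\partial_{\bar z}b=u$ with the solution in $\mathcal{M}(Q_p)$ — equivalently, with $\partial_z b$, and not only $\partial_{\bar z}b=u$, generating a logarithmic $p$-Carleson measure. Wolff's classical $L^{\infty}$ bound on $b$ goes through verbatim, but because $Q_p$ with $0<p<1$ is only a Banach space one cannot invoke the complete Nevanlinna--Pick / Hilbert-space operator theory used for the Dirichlet-type algebras in [BT1], [BT2]; instead one must work with an explicit solution operator adapted to the M\"obius-invariant $p$-Carleson structure (as in [X1], [P]) and estimate its $\partial_z$-derivative by hand, which is where the logarithmic weight is genuinely needed. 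A minor secondary point, handled just as in Theorem~A, is the local factorization showing that $\varphi_j$ and $u_{jk}$ extend smoothly across the common zeros of the $f_j$.
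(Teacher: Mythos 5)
Your Koszul-type skeleton is exactly the one the paper uses: the same $\varphi_j=g^2\bar f_j/F$, the same antisymmetric correction $c_{jk}=b_{jk}-b_{kj}$, and the same verification that $g_j$ is holomorphic with $\sum_j f_jg_j=g^3$. The problem is that you have concentrated all of the actual analytic content into your ``Theorem~B'' and then neither proved it nor matched it to a tool that exists. The paper does \emph{not} solve $\bar\partial b=u$ with a bound on $|\partial_z b|^2(1-|z|^2)^p(\log\tfrac{2}{1-|z|^2})^2\,dA$; it uses the explicit Nicolau--Xiao solution (Lemma~7), whose relevant property --- proved in Pau's paper --- is that the solution $v$ is bounded, lies in $Q_p(\mathbb{T})$, and satisfies $vf\in Q_p(\mathbb{T})$ for \emph{every} $f\in Q_p$. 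Multiplier membership of $g_j$ is then read off from the ``for all $f\in Q_p$'' characterization (Lemma~4) together with the boundary-value criterion $Q_p=Q_p(\mathbb{T})\cap H^2$ and the gradient test of Lemma~6. This only requires the plain $p$-Carleson property of $|u_{jk}|^2(1-|z|^2)^p\,dA$, which your computation does establish. Your proposed route instead requires an intrinsic logarithmic derivative estimate on the solution operator, which is a genuinely different and unestablished statement; flagging it as ``the main obstacle'' does not discharge it.

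There is a second, independent gap in your verification of the data. You reduce to showing that $\sum_k|f_k'(z)|^2(1-|z|^2)^p\bigl(\log\tfrac{2}{1-|z|^2}\bigr)^2dA(z)$ is a bounded $p$-Carleson measure and assert this follows from $f_k\in\mathcal{M}(Q_p)$. The known characterization (Pau--Pel\'aez) controls $\int_{S(I)}|f_k'|^2(1-|z|^2)^p\,dA$ by $|I|^p(\log\tfrac{2}{|I|})^{-2}$, with the logarithm of the \emph{arc length}, not of $1-|z|^2$. Decomposing $S(I)$ into the layers $\{2^{-m-1}|I|<1-|z|\le 2^{-m}|I|\}$, covering each by $2^m$ boxes of side $2^{-m}|I|$, and applying that bound gives a contribution of order $2^{m(1-p)}|I|^p$ from the $m$-th layer, which diverges when summed over $m$ for $0<p<1$. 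So the logarithmic $p$-Carleson property of your data does not follow from the hypotheses by this argument, and your application of ``Theorem~B'' is unsupported even granting its statement. Both gaps disappear if you replace your Theorem~B by Lemma~7 of the paper and check multiplier membership through Lemma~4 rather than through a pointwise-logarithmic derivative criterion.
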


\medskip
We remark that the cases $p=0$ and $\varphi_{a}(z)=z$ of Theorem 2 were proved in [BT1] and [BT2] for infinite number of generators.\\

The paper is organized as follows: In section 2, we collect some results of $Q_p$ and $Q_p (\mathbb{T})$.  We prove our Theorems 1 and 2 in section 3 and we point out some interesting open problems in section 4 . We use the notation $A \lesssim B$ to indicate that there is a constant $c>0$ with $A\leq cB$ and the notation $A \approx B$ to indicate  $A\lesssim B$ and $B \lesssim A$.
\medskip
\section{Basic properties of $Q_p$ spaces and $Q_p (\mathbb{T})$}
\medskip
\subsection{The p-Carleson measures}\
Given an arc $\mathit{I}$ of the unit circle $\mathbb{T}$ with normalized length $\vert I \vert \leq 1$,  let $$S(\mathit{I}):=\left\{ r{e}^{it}\in\mathbb{D}:1-\vert \mathit{I}\vert<r<1, e^{it}\in \mathit{I}\right\}.$$  $S(\mathit{I})$ is called the Carleson square or a sector based on $\mathit{I}.$ For $0<p<\infty$, we say that a positive Borel measure $\mu$ on $\mathbb{D}$ is a $p$-Carleson measure on $\mathbb{D}$ if $$\vert\vert \mu \vert \vert_{p}=\sup_{\mathit{I}\subset{\mathbb{T}}} \frac{\mu(S(\mathit{I}))}{\vert \mathit{I}\vert^p}<\infty,$$ where the supremum is taken over all subarcs $\mathit{I}$ of $\mathbb{T}$.\\
Equivalently, $\mu$ is a $p$-Carleson measure if and only if there is a constant $C>0$ such that $\mu(S(\mathit{I}))\leq C\vert \mathit{I}\vert^p$ for any subarcs $\mathit{I}$ of $\mathbb{T}$. Also, p-Carleson measures can be described in terms of conformal invariants of those positive measures $\mu$ for which $$\sup_{a\in\mathbb{D}} \int_{\mathbb{D}}\left(\frac{1-\vert a\vert^2}{\vert1-{\bar{a}}z\vert^2}\right)^p d\mu(z)<\infty,$$ and this quantity is equivalent to $\vert\vert\mu\vert\vert_p$\; (see [X2]).\\

The  Lemma 1 can be found in [X2], however,  we provide a proof for the completeness.

\begin{lemma}
Let, $0<p\leq 1$. An analytic function f is in $Q_p$ if and only if the measure $\vert f'(z)\vert ^2 \left(1-\vert z\vert^2\right)^p dA(z)$ is a $p$-Carleson measure.
\end{lemma}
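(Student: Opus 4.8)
The plan is to reduce the lemma to the conformally invariant description of $p$-Carleson measures recalled just above. Set $d\mu(z) := |f'(z)|^2(1-|z|^2)^p\,dA(z)$. The one computation needed is the M\"obius identity
\[
1 - |\varphi_a(z)|^2 = \frac{(1-|a|^2)(1-|z|^2)}{|1-\bar a z|^2}, \qquad a,z\in\mathbb{D}.
\]
Raising this to the $p$-th power and inserting it into the defining integral of $Q_p$ gives, for each fixed $a\in\mathbb{D}$,
\[
\int_{\mathbb{D}} |f'(z)|^2\bigl(1-|\varphi_a(z)|^2\bigr)^p\,dA(z) = \int_{\mathbb{D}} \left(\frac{1-|a|^2}{|1-\bar a z|^2}\right)^{p} d\mu(z).
\]
Taking the supremum over $a\in\mathbb{D}$ therefore identifies the $Q_p$-seminorm of $f$ with exactly the conformal-invariant quantity attached to $\mu$; by the equivalence quoted from [X2], this supremum is finite if and only if $\mu$ is a $p$-Carleson measure.

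For the forward implication this already finishes the argument: $f\in Q_p$ forces the seminorm to be finite, hence $\mu$ is a $p$-Carleson measure. For the converse, assume $\mu$ is a $p$-Carleson measure. The identity above together with the [X2] equivalence makes the $Q_p$-seminorm of $f$ finite, so it only remains to control the boundary term $\int_{-\pi}^{\pi}|f|^2\,d\sigma$, i.e.\ to see that $f\in H^{2}(\mathbb{D})$. Applying the Carleson condition with $I=\mathbb{T}$ (so $|I|=1$) gives $\int_{\mathbb{D}}|f'(z)|^2(1-|z|^2)^p\,dA(z)=\mu(\mathbb{D})<\infty$; since $0<p\le 1$ we have $(1-|z|^2)^{p}\ge 1-|z|^2$ on $\mathbb{D}$, whence $\int_{\mathbb{D}}|f'(z)|^2(1-|z|^2)\,dA(z)<\infty$, and the Littlewood--Paley identity $\|f\|_{H^2}^2\approx |f(0)|^2+\int_{\mathbb{D}}|f'(z)|^2(1-|z|^2)\,dA(z)$ puts $f$ in $H^{2}(\mathbb{D})$ (the additive constant being harmless). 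Thus $f\in Q_p$.

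I do not anticipate a genuine obstacle: all the substance sits in the M\"obius identity and in the already-cited comparison between $\sup_{a}\int_{\mathbb{D}}(1-|a|^2)^p|1-\bar a z|^{-2p}\,d\mu(z)$ and $\sup_{I}\mu(S(I))/|I|^{p}$. Should one want a fully self-contained proof, the only extra work is to re-derive that comparison: the lower bound by restricting the integral to $S(I)$, where the kernel is bounded below; the upper bound by splitting $\mathbb{D}$ into the Carleson box over the arc of length $1-|a|$ centered at $a/|a|$ together with its dyadic dilates, estimating the kernel by $\lesssim 2^{-kp}(1-|a|)^{-p}$ on the $k$-th shell, and summing the resulting geometric series (here $p>0$ is what guarantees convergence). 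This, and matching $\int_{-\pi}^{\pi}|f|^2\,d\sigma$ with $\|f\|_{H^2}^2$ via nontangential boundary values, are the only routine points.
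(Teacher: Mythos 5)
Your proof is correct and follows essentially the same route as the paper: insert the M\"obius identity $1-|\varphi_a(z)|^2=(1-|a|^2)(1-|z|^2)/|1-\bar a z|^2$ into the $Q_p$-seminorm and invoke the quoted equivalence between $\sup_a\int_{\mathbb{D}}\bigl((1-|a|^2)/|1-\bar a z|^2\bigr)^p d\mu$ and the $p$-Carleson condition. Your treatment of the converse is in fact more careful than the paper's (which just says ``reverse the argument''), since you explicitly verify the $H^2$ boundary term via $(1-|z|^2)^p\ge 1-|z|^2$ and Littlewood--Paley.
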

\begin{proof}
Let $f\in{Q_p}$, then by definition of $Q_p$ norm, we have that $$ \underset{a\in \mathbb{D}}{\sup}\int_\mathbb{D}\vert f'(z)\vert^2(1-\vert \varphi_{a}(z)\vert^2)^p dA(z)\ < \infty.$$
This implies that
\begin{align*}
&\underset{a\in \mathbb{D}}{\sup}\int_\mathbb{D}\vert f'(z)\vert^2 \frac{\left(1-\vert z\vert^2\right)^p \left(1-\vert a \vert^2\right)^p}{\vert 1-\bar{a}z\vert^{2p}} dA(z) \\
&=\underset{a\in \mathbb{D}}{\sup}\int_\mathbb{D} \left(\frac{ 1-\vert a \vert^2}{\vert 1-\bar{a}z\vert^{2}}\right)^p \vert f'(z)\vert^2 \left(1-\vert z\vert^2\right)^p dA(z)\ < \infty.
\end{align*}

Hence, by definition of $p$-Carleson measure,  $\vert f'(z)\vert ^2 \left(1-\vert z\vert^2\right)^p dA(z)$ is an $p$-Carleson measure.
Converse can be easily obtained just by reversing the above argument.
\end{proof}
\medskip
We need the following series of lemmas whose proofs are excluded here. One can refer to [P], [NX], [X1], and [X2] for complete proofs. 
\medskip
\begin{lemma}
Let $z\in {\mathbb{D}}, t<-1$ and $c>0$. Then
$$ \int_\mathbb{D}\frac{(1-\vert w\vert ^2)^{t}}{\vert 1-\bar{w}z\vert^{2+t+c}} \approx (1-\vert z\vert ^2)^{-c}$$
 \end{lemma}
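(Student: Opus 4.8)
The statement is a standard Forelli--Rudin type estimate; the plan is to reduce the two-dimensional integral to a one-dimensional radial integral by integrating over the circles $|w|=r$, and then to estimate that radial integral by a dyadic case split. Throughout I take $t>-1$ (the range in which the left side actually converges) and $c>0$, and I may assume $|z|\ge\tfrac12$, since for $|z|<\tfrac12$ the integrand is bounded above and below by positive constants on $\{|w|<\tfrac12\}$ and both sides of the claimed equivalence are then comparable to $1$.

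First I would pass to polar coordinates $w=re^{i\phi}$, set $\rho=r|z|\in[0,1)$, and integrate in $\phi$; since $|1-\bar wz|=|1-\rho e^{i(\psi-\phi)}|$ with $\psi=\arg z$, this removes all dependence on $\arg z$, and after replacing $(1-r^2)^t$ by $(1-r)^t$ and absorbing the contribution of $r\le\tfrac12$ into a bounded term one gets
\[
\int_{\mathbb{D}}\frac{(1-|w|^2)^t}{|1-\bar wz|^{2+t+c}}\,dA(w)\ \approx\ 1+\int_{1/2}^1(1-r)^t\left(\int_0^{2\pi}\frac{d\alpha}{|1-\rho e^{i\alpha}|^{2+t+c}}\right)dr .
\]
The inner integral is handled by the elementary bound $|1-\rho e^{i\alpha}|^{2}\approx(1-\rho)^{2}+\alpha^{2}$ together with the rescaling $\alpha=(1-\rho)v$: since $s:=2+t+c>1$, the integral $\int_{\mathbb{R}}(1+v^{2})^{-s/2}\,dv$ is finite, giving $\int_0^{2\pi}|1-\rho e^{i\alpha}|^{-s}\,d\alpha\approx(1-\rho)^{1-s}=(1-\rho)^{-(1+t+c)}$ with constants depending only on $s$. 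Substituting back (with $1-\rho=1-r|z|$) reduces the lemma to the one-variable estimate
\[
J:=\int_{1/2}^1(1-r)^t\,(1-r|z|)^{-(1+t+c)}\,dr\ \approx\ (1-|z|^2)^{-c},
\]
the additive $1$ being harmless since $(1-|z|^2)^{-c}\ge1$ when $|z|\ge\tfrac12$.

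For $J$, the key elementary point is that $1-r|z|=(1-r)+r(1-|z|)\approx(1-r)+(1-|z|)$ for $r,|z|\in[\tfrac12,1)$. Writing $\delta=1-|z|$ and $u=1-r$, this turns $J$ into $\int_0^{1/2}u^{t}(\delta+u)^{-(1+t+c)}\,du$ up to constants, and I would split at $u=\delta$: on $[0,\delta]$ one has $\delta+u\approx\delta$ and $\int_0^\delta u^{t}\,du\approx\delta^{t+1}$ (using $t+1>0$), contributing $\approx\delta^{-c}$; on $[\delta,\tfrac12]$ one has $\delta+u\approx u$ and $\int_\delta^{1/2}u^{-1-c}\,du\approx\delta^{-c}$ (using $c>0$). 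Hence $J\approx\delta^{-c}\approx(1-|z|^2)^{-c}$, and since every step above is a two-sided estimate this completes the proof; the lower bound can also be read off directly by restricting the original integral to a Carleson box $S(I)$ with $|I|\approx1-|z|$ containing $z$. I do not expect any genuine obstacle here: the argument is entirely routine, and the only points needing care are the convergence range $t>-1$, the harmless reduction to $|z|\ge\tfrac12$, and the comparison $1-r|z|\approx(1-r)+(1-|z|)$ that drives the case split; alternatively one may simply quote this estimate from [X2] (or from Zhu's monograph), as the authors in fact do.
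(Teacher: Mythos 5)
Your argument is correct, and it supplies more than the paper does: the paper states this lemma with no proof at all, deferring to [P], [NX], [X1] and [X2] — it is the standard Forelli--Rudin estimate (see also Lemma 3.10 of Zhu's \emph{Operator Theory in Function Spaces}). Your route — polar coordinates, the one--dimensional kernel estimate $\int_0^{2\pi}|1-\rho e^{i\alpha}|^{-s}\,d\alpha\approx(1-\rho)^{1-s}$ for $s>1$, the comparison $1-r|z|\approx(1-r)+(1-|z|)$, and the split of $\int_0^{1/2}u^{t}(\delta+u)^{-(1+t+c)}\,du$ at $u=\delta$ — is exactly the classical proof, and since every step is a two-sided bound you do obtain both inequalities of the asserted equivalence. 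You were also right to insist on $t>-1$: as printed, with $t<-1$, the lemma is false, because $(1-|w|^2)^{t}$ is then non-integrable near every boundary point away from $z/|z|$, where the kernel is bounded above and below, so the left-hand side is $+\infty$ for every $z$; the hypothesis must be $t>-1$, which is also what is needed when the lemma is invoked (e.g.\ in the proof of Lemma 3). The only cosmetic imprecision is in your reduction to $|z|\ge\tfrac12$: boundedness of the integrand on $\{|w|<\tfrac12\}$ gives only the lower bound $\gtrsim 1$; the matching upper bound uses $|1-\bar wz|\ge 1-|z|\ge\tfrac12$ on all of $\mathbb{D}$ together with $\int_{\mathbb{D}}(1-|w|^2)^{t}\,dA(w)<\infty$ (again $t>-1$). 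That is a one-line fix and not a gap.
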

\begin{lemma}
Let $0<p<1$. If $\vert g(z)\vert^2 \left(1-\vert z\vert^2\right)^p dA(z)$ is a $p$-Carleson measure, then $\vert g(z)\vert dA(z)$ is a Carleson measure.
\end{lemma}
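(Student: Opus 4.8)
The plan is a short Cauchy--Schwarz estimate on Carleson boxes. Set $d\nu(z):=|g(z)|^2(1-|z|^2)^p\,dA(z)$; by hypothesis $\nu$ is a $p$-Carleson measure, so there is a constant $C>0$ with $\nu(S(I))\le C|I|^p$ for every subarc $I\subset\mathbb{T}$. I want to show that $d\mu(z):=|g(z)|\,dA(z)$ satisfies $\mu(S(I))\lesssim|I|$ for every such $I$, which is precisely the assertion that $|g(z)|\,dA(z)$ is a Carleson measure.

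Fix $I$. I would write $|g(z)|=\big(|g(z)|^2(1-|z|^2)^p\big)^{1/2}\,(1-|z|^2)^{-p/2}$ and apply the Cauchy--Schwarz inequality on $S(I)$:
\begin{align*}
\int_{S(I)}|g|\,dA
&\le\left(\int_{S(I)}|g|^2(1-|z|^2)^p\,dA\right)^{1/2}\left(\int_{S(I)}(1-|z|^2)^{-p}\,dA\right)^{1/2}\\
&\le\big(C|I|^p\big)^{1/2}\left(\int_{S(I)}(1-|z|^2)^{-p}\,dA\right)^{1/2}.
\end{align*}
For the remaining factor, recall that on $S(I)=\{re^{it}:1-|I|<r<1,\ e^{it}\in I\}$ one has $1-|z|^2\approx 1-|z|$; substituting $s=1-r$, the radial integral is $\int_0^{|I|}s^{-p}\,ds=\frac{|I|^{1-p}}{1-p}$, which is finite exactly because $0<p<1$, while the angular variable contributes a factor comparable to $|I|$. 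Hence
\begin{equation*}
\int_{S(I)}(1-|z|^2)^{-p}\,dA\ \approx\ \frac{|I|^{2-p}}{1-p}.
\end{equation*}
Combining the two displays gives $\int_{S(I)}|g|\,dA\lesssim\big(C|I|^p\big)^{1/2}\big(|I|^{2-p}\big)^{1/2}\lesssim|I|$, uniformly in $I$ (the case $I=\mathbb{T}$ in particular yields $\int_{\mathbb{D}}|g|\,dA<\infty$), so $|g(z)|\,dA(z)$ is a Carleson measure.

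I do not expect a genuine obstacle here; the two points worth flagging are that the hypothesis $0<p<1$ is used essentially --- it is what makes $\int_{S(I)}(1-|z|^2)^{-p}\,dA$ convergent and gives it the homogeneity $|I|^{2-p}$ --- and that the Cauchy--Schwarz split is arranged so that the two powers $p/2$ and $(2-p)/2$ add up to $1$, producing the linear bound $|I|$ characteristic of a Carleson measure rather than a $p$-Carleson bound. One could also run the same computation through the conformal-invariant characterization of $p$-Carleson measures recorded in Section 2.1, but the Carleson-box version above is the shortest route.
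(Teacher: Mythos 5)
Your proof is correct: the Cauchy--Schwarz split $|g|=\bigl(|g|^2(1-|z|^2)^p\bigr)^{1/2}(1-|z|^2)^{-p/2}$, the bound $\nu(S(I))\le C|I|^p$ for the first factor, and the elementary computation $\int_{S(I)}(1-|z|^2)^{-p}\,dA\approx|I|^{2-p}/(1-p)$ for the second combine exactly as you say, and the exponents $p/2+(2-p)/2=1$ deliver the Carleson bound $\lesssim|I|$. The paper does not write out a proof; it only points to [P, Lemma 2.2] and says the result follows from Cauchy--Schwarz together with Lemma 2. That route runs the same Cauchy--Schwarz split through the M\"obius-invariant characterization $\sup_a\int_{\mathbb{D}}\frac{1-|a|^2}{|1-\bar az|^2}|g|\,dA<\infty$, using Lemma 2 with $t=-p$, $c=2-p$ to evaluate $\int_{\mathbb{D}}\frac{(1-|w|^2)^{-p}}{|1-\bar wz|^{4-2p}}\,dA(w)\approx(1-|z|^2)^{-(2-p)}$. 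Your box version is the more elementary of the two --- it needs nothing beyond a one-variable integral and makes visible exactly where $0<p<1$ enters --- while the invariant-kernel version is the one that generalizes smoothly to the other integral estimates used later in the paper. Either is acceptable; yours is complete as written.
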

\begin{proof}
One can obtain this result using Cauchy-Schwarz inequalty and Lemma 2 (see, for example, [P, Lemma 2.2]).
\end{proof} 

We also need the next result, which can be found in [P] ( see also Theorem 7.4.2 of [X2]).
\begin{lemma} 
Let $p>0$. Then $g\in\mathcal{M}(Q_p)$ if and only if  $g\in H^{\infty}(\mathbb{D})$, and for all $f\in {Q_p}$, the measure $\vert f(z)\vert^2 \vert g'(z)\vert ^2 \left(1-\vert z\vert^2\right)^p dA(z)$ is a $p$-Carleson measure. 
\end{lemma}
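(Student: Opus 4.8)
The plan is to reduce the whole equivalence to Lemma 1, the product rule $(gf)' = g'f + gf'$, and two elementary facts about $p$-Carleson measures that I will use without further comment: a positive measure dominated pointwise by a $p$-Carleson measure is $p$-Carleson, and a finite sum of $p$-Carleson measures is $p$-Carleson; both follow at once from the defining estimate $\mu(S(I)) \lesssim |I|^p$. I will also use that if $h \in H^\infty(\mathbb{D})$ and $d\mu$ is $p$-Carleson, then $|h|^2 d\mu \leq \|h\|_\infty^2 d\mu$ is $p$-Carleson. Since the argument only invokes Lemma 1, it directly covers the range $0 < p \leq 1$, in particular the case $p\in(0,1)$ of interest here; the general $p>0$ statement then follows identically from the corresponding $Q_p$-characterization in [X2].

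For the sufficiency, assume $g \in H^\infty(\mathbb{D})$ and that $|f(z)|^2|g'(z)|^2(1-|z|^2)^p dA(z)$ is $p$-Carleson for every $f \in Q_p$. Fix $f \in Q_p$. From $|a+b|^2 \leq 2|a|^2 + 2|b|^2$ I get
\[
|(gf)'(z)|^2(1-|z|^2)^p \leq 2|f(z)|^2|g'(z)|^2(1-|z|^2)^p + 2|g(z)|^2|f'(z)|^2(1-|z|^2)^p .
\]
The first term on the right is $p$-Carleson by hypothesis, and the second is $p$-Carleson because $g \in H^\infty(\mathbb{D})$ while $|f'|^2(1-|z|^2)^p dA$ is $p$-Carleson by Lemma 1. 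Hence $|(gf)'|^2(1-|z|^2)^p dA$ is dominated by a sum of two $p$-Carleson measures, so $gf \in Q_p$ by Lemma 1; the boundary contribution is harmless since $\int_{-\pi}^\pi |gf|^2 d\sigma \leq \|g\|_\infty^2 \int_{-\pi}^\pi |f|^2 d\sigma < \infty$. Taking $f \equiv 1$ shows in addition that $g \in Q_p$, so $g \in \mathcal{M}(Q_p)$.

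For the necessity, assume $g \in \mathcal{M}(Q_p)$. The inclusion $g \in H^\infty(\mathbb{D})$ is exactly the containment $\mathcal{M}(Q_p) \subseteq H^\infty(\mathbb{D}) \cap Q_p$ recorded in the introduction from [X1]. For the measure condition, fix $f \in Q_p$ and isolate $g'f$ by writing $g'f = (gf)' - gf'$, which yields
\[
|f(z)|^2|g'(z)|^2(1-|z|^2)^p \leq 2|(gf)'(z)|^2(1-|z|^2)^p + 2|g(z)|^2|f'(z)|^2(1-|z|^2)^p .
\]
Since $g \in \mathcal{M}(Q_p)$ gives $gf \in Q_p$, the first term is $p$-Carleson by Lemma 1, and the second is $p$-Carleson as before. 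Thus $|f|^2|g'|^2(1-|z|^2)^p dA$ is dominated by a $p$-Carleson measure and is therefore itself $p$-Carleson, which is the required condition.

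I do not anticipate a genuine obstacle: the two implications are mirror images, each turning on the splitting of $(gf)'$ and on $\|g\|_\infty < \infty$ to absorb the $|g|^2|f'|^2$ term. The one point worth flagging is that in the necessity direction one cannot read off the behaviour of $|g'f|^2$ from $gf \in Q_p$ directly, because $(gf)'$ entangles $g'f$ with $gf'$; the identity $g'f = (gf)' - gf'$ is precisely what disentangles them, and this rearrangement is the heart of the argument.
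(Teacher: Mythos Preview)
The paper does not supply its own proof of this lemma; it merely cites [P] and Theorem~7.4.2 of [X2]. Your argument is correct and gives a self-contained proof using only Lemma~1, the product rule, and the inclusion $\mathcal{M}(Q_p)\subseteq H^\infty(\mathbb{D})\cap Q_p$ from the introduction; the splitting $(gf)'=g'f+gf'$ in one direction and $g'f=(gf)'-gf'$ in the other is exactly the standard mechanism behind the cited results, so your proof is in the same spirit as the references the paper defers to.
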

\smallskip
\subsection{Boundary Values of $Q_p$ Spaces}\  
Let $0<p<1$.  A function  $f\in L^2(\mathbb {T})$ is said to be in $Q_p(\mathbb{T})$ if 
$$ \vert \vert f\vert\vert _{Q_p (\mathbb{T})}^2:=\int_{-\pi}^\pi \vert f \vert ^2 d\sigma + \sup_{I\subset\mathbb{T}}\frac{1}{\vert I\vert^p} \int_I \int_I \frac{\vert f(\xi)-f(\eta)\vert ^2}{\vert \xi -\eta \vert ^{2-p}} \vert d\xi\vert \vert d\eta \vert <\infty, $$ where the supremum is taken over all arcs $I\subset\mathbb{T}.$\\
Since $Q_p \subset H^2$, where $H^2$ is the classical Hardy space, any function $f\in Q_p$ has a non-tangential radial limit almost everywhere on $\mathbb{T}$. It is also true that, for $p\in (0,1), Q_{p}=Q_{p}(\mathbb{T})\cap{H}^{2}(\mathbb{D})$. The following Lemma from  [X2] proves that an analytic function $f$ on $\mathbb{D}$ is in $Q_p$ if and only if its boundary values lie on $Q_p(\mathbb{T}).$

\begin{lemma}
Let $p\in (0,1)$ and let $f\in H^2$. Then $f\in Q_p$ if and only if 
$$\Vert f\Vert_{Q_p(\mathbb{T})}^2=\int_{-\pi}^\pi \vert f \vert ^2 d\sigma +\sup_{I\subset T} \vert I\vert^{-p} \int_I \int_I \frac{\vert f(\zeta)-f(\eta)\vert^2}{\vert \zeta -\eta \vert ^{2-p}} \vert d\zeta \vert \vert d \eta \vert <\infty.$$
\end{lemma}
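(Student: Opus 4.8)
The plan is to prove the two-sided equivalence between the area-integral definition of $Q_p$ and the boundary double integral by passing through a conformal change of variables and a Littlewood--Paley (Besov-type) identity. First I would record that, by the conformally invariant description of $p$-Carleson measures quoted just before Lemma 2 together with Lemma 1, membership $f\in Q_p$ is equivalent to $\sup_{a\in\mathbb{D}}\int_\mathbb{D}|f'(z)|^2(1-|\varphi_a(z)|^2)^p\,dA(z)<\infty$. Writing $g_a:=f\circ\varphi_a$ and substituting $w=\varphi_a(z)$ (an involution, so $|\varphi_a'(w)|^2\,dA(w)=dA(z)$ and $1-|\varphi_a(z)|^2=1-|w|^2$) turns each term into $\int_\mathbb{D}|g_a'(w)|^2(1-|w|^2)^p\,dA(w)$. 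Thus $f\in Q_p$ if and only if $\sup_a\int_\mathbb{D}|g_a'(w)|^2(1-|w|^2)^p\,dA(w)<\infty$.

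Next I would establish the global Littlewood--Paley identity: for any $g\in H^2$,
$$\int_\mathbb{D}|g'(w)|^2(1-|w|^2)^p\,dA(w)\approx\int_\mathbb{T}\int_\mathbb{T}\frac{|g(\zeta)-g(\eta)|^2}{|\zeta-\eta|^{2-p}}\,|d\zeta|\,|d\eta|.$$
Both sides diagonalize on the Taylor/Fourier basis $g=\sum_n\hat g(n)w^n$. On the left, Lemma 2 gives $\int_\mathbb{D}|w^{n-1}|^2(1-|w|^2)^p\,dA\approx n^{-(1+p)}$, so the left side is $\approx\sum_n n^{1-p}|\hat g(n)|^2$. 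On the right, the kernel $|\zeta-\eta|^{-(2-p)}=|1-\zeta\bar\eta|^{-(2-p)}$ is a convolution kernel on $\mathbb{T}$, so the double integral also diagonalizes; testing on $\zeta^n$ and substituting $\phi=n\theta$ shows its eigenvalues are again $\approx n^{1-p}$. Hence both quantities are comparable to the fractional Sobolev seminorm $\sum_n n^{1-p}|\hat g(n)|^2$.

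Applying this identity with $g=g_a=f\circ\varphi_a$ and then performing the boundary change of variables $\zeta=\varphi_a(\xi)$, $\eta=\varphi_a(\tau)$, using $|\varphi_a(\xi)-\varphi_a(\tau)|=\frac{(1-|a|^2)|\xi-\tau|}{|1-\bar a\xi||1-\bar a\tau|}$ and $|\varphi_a'(\xi)|=\frac{1-|a|^2}{|1-\bar a\xi|^2}$, I would obtain, after collecting the powers of $(1-|a|^2)$ and of $|1-\bar a\zeta|$, that
$$\int_\mathbb{D}|g_a'|^2(1-|w|^2)^p\,dA\approx(1-|a|^2)^p\int_\mathbb{T}\int_\mathbb{T}\frac{|f(\zeta)-f(\eta)|^2}{|\zeta-\eta|^{2-p}}\,\frac{|d\zeta|\,|d\eta|}{|1-\bar a\zeta|^p|1-\bar a\eta|^p}.$$
It then remains to show that the supremum over $a$ of the right-hand side is comparable to $\sup_{I}|I|^{-p}\int_I\int_I\frac{|f(\zeta)-f(\eta)|^2}{|\zeta-\eta|^{2-p}}\,|d\zeta|\,|d\eta|$, which by the first paragraph completes the proof.

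This last comparison is where the real work lies. For the lower bound, take $a=(1-|I|)\zeta_I$ with $\zeta_I$ the center of $I$; then $(1-|a|^2)^p|1-\bar a\zeta|^{-p}|1-\bar a\eta|^{-p}\gtrsim|I|^{-p}$ for $\zeta,\eta\in I$, so restricting the double integral to $I\times I$ yields $\sup_a(\cdots)\gtrsim\sup_I|I|^{-p}B_I$, where $B_I$ denotes the inner double integral over $I\times I$. The upper bound is the main obstacle. Fixing $a$, I would set $I$ to be the arc of length $\approx 1-|a|$ centered at $a/|a|$ and decompose $\mathbb{T}$ into dyadic arcs $I_k\supset I$ with $|I_k|\approx 2^k|I|$; on the annulus $I_k\setminus I_{k-1}$ one has $|1-\bar a\zeta|\approx 2^k|I|$, so the weight there is $\approx|I|^{-p}2^{-2kp}$. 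The diagonal pairs $I_k\times I_k$ sum geometrically against $\sup_I|I|^{-p}B_I$. The delicate part is the off-diagonal pairs $I_j\times I_k$ with $j\neq k$: there $|\zeta-\eta|\approx 2^{\max(j,k)}|I|$, and one must control the oscillation $|f(\zeta)-f(\eta)|$ across well-separated arcs and sum the resulting contributions uniformly in $a$. Carrying out this off-diagonal estimate, so that the tails converge independently of $a$, is the technically demanding step; once it is in place the equivalence, and hence Lemma 5, follows, the remaining ingredients being the coefficient asymptotics of Lemma 2 and routine conformal bookkeeping.
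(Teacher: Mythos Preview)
The paper does not supply a proof of this lemma at all: it is one of the results announced before Lemma~2 as quoted without proof, and the text immediately preceding the statement attributes it to Xiao's monograph~[X2]. So there is no argument in the paper to compare your proposal against; the only comment the paper adds is that the proof in~[X2] yields the norm equivalence $\Vert f\Vert_{Q_p}\approx\Vert f\Vert_{Q_p(\mathbb T)}$.

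As for the substance of your outline: Steps~1--3 (the conformal reduction to $g_a=f\circ\varphi_a$, the global Besov/Littlewood--Paley coefficient identity, and the boundary change of variables) are correct and routine. The real content, as you yourself flag, is Step~4, and there the sketch is genuinely incomplete. Your dyadic scheme handles the diagonal blocks $A_m\times A_m$ (they sum against $2^{-mp}\sup_J|J|^{-p}B_J$), but for the off-diagonal blocks $A_j\times A_m$ with $j<m$ the crude bound $\int_{A_j\times A_m}K\le B_{I_m}$ only yields $\sum_m|I_m|^{-p}B_{I_m}$, which diverges; one must extract extra decay from the smallness of $|A_j|$ together with a Poincar\'e/telescoping control of $|f-f_{I_m}|$ on $A_j$, and this is exactly the ``technically demanding step'' you leave undone. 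The approach in~[X2] (and implicitly in Lemma~6 here) sidesteps this product--space difficulty by proving, arc by arc, a localized Littlewood--Paley comparison between $\int_{S(I)}|\nabla\tilde f|^2(1-|z|^2)^p\,dA$ and $\int_I\int_I|f(\zeta)-f(\eta)|^2|\zeta-\eta|^{p-2}\,|d\zeta|\,|d\eta|$ via Green's formula on the Carleson box, which keeps the analysis one arc at a time rather than requiring a uniform off-diagonal summation.
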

We can see in the proof that $$\Vert f\Vert_{Q_p}\approx \Vert f\Vert _{Q_p(\mathbb{T})}.$$
The next Lemma, proved in [NX] (see also [X2, Corollary 7.1.1]), is also an important tool for us to check that a function belongs to $Q_p(\mathbb{T}).$
\begin{lemma} 
Let $\; 0<p<1$ and $f\in L^2(\mathbb{T})$, and let $ F\in C^{1}(\mathbb{D})$  such that $\underset{r\rightarrow 1^{-}}\lim F(re^{it})=f(e^{it})$ for a.e. $e^{it}\in \mathbb{T}$. If $\vert\nabla  F(z)\vert ^2 \left(1-\vert z\vert ^2\right)^p dA(z)$ is a $p$-Carleson measure, then $f\in Q_p(\mathbb{T}).$
\end{lemma}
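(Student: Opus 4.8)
The plan is to estimate the $Q_p(\mathbb{T})$ seminorm of $f$ directly from the area integral of the extension $F$, one arc at a time. Fix a subarc $I\subset\mathbb{T}$, write $\xi=e^{is}$, $\eta=e^{it}$ with $s,t$ in the corresponding angular interval, and set $u=|s-t|\approx|\xi-\eta|$. The first move is to interpolate the two boundary values through the single interior ``pivot'' point on each radius at depth $u$:
$$f(\xi)-f(\eta)=\bigl[f(\xi)-F((1-u)\xi)\bigr]+\bigl[F((1-u)\xi)-F((1-u)\eta)\bigr]+\bigl[F((1-u)\eta)-f(\eta)\bigr].$$
Calling these three terms $A,B,C$ and using $|A+B+C|^2\le 3(|A|^2+|B|^2+|C|^2)$, it suffices to bound the contribution of each term to the double integral $\int_I\int_I|\cdots|^2\,|\xi-\eta|^{p-2}\,|d\xi||d\eta|$ by a constant multiple of $|I|^p$; taking the supremum over $I$ then controls the seminorm, and $f\in L^2(\mathbb{T})$ is given.

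The radial terms $A$ and $C$ are the heart of the matter. Writing $A=\int_0^u \partial_r F((1-v)\xi)\,dv$ and reducing the double integral (for fixed $\xi$) to $\int_I\int_0^{|I|} u^{p-2}|A|^2\,du\,ds$, a naive Cauchy--Schwarz bound would produce the \emph{unweighted} Dirichlet integral $\int_{S(I)}|\nabla F|^2\,dA$, which the $p$-Carleson condition does not control because the weight $(1-|z|^2)^p$ is small near $\mathbb{T}$. The device that restores the correct weight is the weighted Hardy inequality
$$\int_0^\infty u^{p-2}\Bigl(\int_0^u \phi(v)\,dv\Bigr)^2 du\le \frac{4}{(1-p)^2}\int_0^\infty u^p\,\phi(u)^2\,du,$$
valid precisely because the exponent $2-p$ exceeds $1$ for $p\in(0,1)$. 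Applying it with $\phi(v)=|\partial_r F((1-v)\xi)|$ converts the $A$-contribution into $\int_I\int_0^{|I|} v^p|\partial_r F((1-v)\xi)|^2\,dv\,ds$, and after the substitution $r=1-v$ (so that $v^p\approx(1-|z|^2)^p$ and $dv\,ds\approx dA$) this is comparable to $\int_{S(I)}|\nabla F(z)|^2(1-|z|^2)^p\,dA(z)\le C|I|^p$ by hypothesis. The term $C$ is identical with $\eta$ in place of $\xi$.

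For the tangential term $B=\int_t^s \partial_\theta F((1-u)e^{i\theta})\,d\theta$, I would use $|\partial_\theta F|\le|\nabla F|$ and Cauchy--Schwarz to obtain $|B|^2\le u\int_{\min(s,t)}^{\max(s,t)}|\nabla F((1-u)e^{i\theta})|^2\,d\theta$. Substituting into $\int_I\int_0^{|I|} u^{p-2}|B|^2\,du\,ds$ and applying Fubini in the variables $(\theta,u)$ — for fixed depth $u$ and angle $\theta$, the set of admissible pairs $(s,t)$ with $\theta$ between $s$ and $t$ and $|s-t|=u$ has measure $\approx u$ — collapses the triple integral to $\int_0^{|I|}\int_I u^p|\nabla F((1-u)e^{i\theta})|^2\,d\theta\,du$. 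Since $u^p\approx(1-|z|^2)^p$ and $d\theta\,du\approx dA$, this is again $\lesssim\int_{S(I)}|\nabla F|^2(1-|z|^2)^p\,dA\le C|I|^p$.

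Combining the three estimates and taking the supremum over all subarcs $I$ bounds the Carleson-type double integral in the definition of $\|f\|_{Q_p(\mathbb{T})}$, so together with $f\in L^2(\mathbb{T})$ we conclude $f\in Q_p(\mathbb{T})$. I expect the genuine obstacle to be the radial terms: the correct appearance of the weight $(1-|z|^2)^p$ is not forced by Cauchy--Schwarz alone, and isolating the weighted Hardy inequality (which works only for $p<1$, matching the range of the lemma) is what makes the argument succeed; the tangential term is then routine Fubini bookkeeping, and one may harmlessly enlarge $I$ to $2I$ so that every pivot point lies in the relevant Carleson box.
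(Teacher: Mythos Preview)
The paper does not give its own proof of this lemma; it simply states the result and cites [NX] and [X2, Corollary 7.1.1]. So there is nothing to compare your argument against in this paper.

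That said, your direct argument is sound and self-contained. The three-piece decomposition through the interior pivot at depth $u=|s-t|$ is the natural move, and you have correctly identified the crux: the radial pieces $A$ and $C$ require the weighted Hardy inequality
\[
\int_0^{L} u^{p-2}\Bigl(\int_0^u \phi(v)\,dv\Bigr)^{2} du \ \le\ \frac{4}{(1-p)^{2}}\int_0^{L} u^{p}\,\phi(u)^{2}\,du,
\]
which holds precisely in the range $0<p<1$ (integration by parts using $(1-p)u^{p-2}=-(u^{p-1})'$, with the boundary term at $L$ having the favourable sign and the one at $0$ vanishing because the finiteness of the right-hand side forces $F(u)^2=o(u^{1-p})$). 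This is exactly what converts the bare radial integral into the weighted one that the $p$-Carleson hypothesis controls. Your treatment of the tangential piece $B$ via Cauchy--Schwarz and the Fubini count (for fixed $u$ and $\theta$, the set of $s$ with $\theta\in[s-u,s]$ has length $u$) is correct and produces the extra factor $u$ that upgrades $u^{p-1}$ to $u^{p}$.

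Two minor technical points you should make explicit in a full write-up: (i) the identity $f(\xi)-F((1-u)\xi)=\int_{1-u}^{1}\partial_r F(r\xi)\,dr$ holds only for a.e.\ $\xi$, which is justified because $\int_{\mathbb D}|\nabla F|^{2}(1-|z|^{2})^{p}\,dA<\infty$ together with $p<1$ forces $\partial_r F$ to be integrable along almost every radius; (ii) $|\partial_\theta F|\le r|\nabla F|\le |\nabla F|$ follows from the polar form $|\nabla F|^{2}=|\partial_r F|^{2}+r^{-2}|\partial_\theta F|^{2}$. With these in place, your argument is complete and in fact gives the quantitative bound $\|f\|_{Q_p(\mathbb T)}^{2}\lesssim \|f\|_{L^2(\mathbb T)}^{2}+\|\,|\nabla F|^{2}(1-|z|^{2})^{p}\,\|_{p}$.
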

With the help of this lemma, we can easily see that a function $f\in L^2(\mathbb{T})$ belongs to $Q_p(\mathbb{T})$ if and only if $\vert\nabla  \tilde{f}(z)\vert ^2 \left(1-\vert z\vert ^2\right)^p dA(z)$ is a $p$-Carleson measure, where $\tilde{f}$ denotes the Poisson integral of $f$(for example, see [NX]).

Let  $\mathcal{M}(Q_{p}(\mathbb{T}))$  be the space of multipliers on $Q_p(\mathbb{T}),$ that is  $$\mathcal{M}(Q_{p}(\mathbb{T})):=\left\{ \phi\in Q_{p}(\mathbb{T}) :\, M_{\phi}(f)=\phi f\in Q_{p}(\mathbb{T})\; \text{ for all } f\in Q_{p}(\mathbb{T})\right\}.$$
As in $Q_p$, it's clear that  $\mathcal{M}(Q_{p}(\mathbb{T}))\subseteq L^\infty \cap Q_p(\mathbb{T})$. 
One of the important parts of the proofs of our Theorems 1 and 2 is establishing the  solvability of the  $\bar{\partial}$ - equation, which turns out to be simpler because of the following Lemma.

\begin{lemma}
Let $p\in (0,1).$ If $d\lambda(z)=\vert g(z)\vert ^2\left(1-\vert z\vert ^2\right)^{p}dA(z)$ is a $p$-Carleson measure, then $\bar{\partial}u=g$ has a solution $v\in Q_{p}(\mathbb{T})\cap L^{\infty}(\mathbb{T})$
such that
\begin{align*}
 (i) &\;\;   \vert\vert v \vert \vert_{Q_p(\mathbb{T})}+\vert \vert v \vert \vert _{L^{\infty}(\mathbb{T})}\leq C \vert \vert \lambda \vert \vert _{p}^{\frac{1}{2}}, \; \text {where C is an absolute constant}.\\
(ii) & \;\; vf\in Q_p(\mathbb{T})\;\;  \text{for all}\;\; f\in Q_p.
\end{align*}  
\end{lemma}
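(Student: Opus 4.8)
The plan is to take $v$ to be the boundary value of an explicit bounded solution $U$ of $\bar\partial U=g$ on $\mathbb{D}$ — the Jones/Wolff-type solution already employed for the corona theorem on $H^{\infty}(\mathbb{D})\cap Q_p$ in [NX] and [X1] — and to verify the two asserted properties against this $U$. The first thing I would record is that $d\lambda=|g(z)|^2(1-|z|^2)^p\,dA(z)$ being a $p$-Carleson measure makes the relevant classical machinery applicable: by Lemma 3 and H\"older's inequality $|g(z)|\,dA(z)$ is an ordinary Carleson measure with norm $\lesssim\|\lambda\|_p^{1/2}$, and by writing $(1-|z|^2)=(1-|z|^2)^p(1-|z|^2)^{1-p}$ one sees that $|g(z)|^2(1-|z|^2)\,dA(z)$ is an ordinary Carleson measure with norm $\lesssim\|\lambda\|_p$. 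With these two inputs the Jones/Wolff construction produces $U\in C^1(\mathbb{D})$ with $\bar\partial U=g$, with radial limits $v$ a.e. on $\mathbb{T}$, and with $\|U\|_{L^{\infty}(\mathbb{D})}\le C\|\lambda\|_p^{1/2}$; this already gives the $L^{\infty}$ part of (i).

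For the $Q_p(\mathbb{T})$ part of (i) I would invoke Lemma 6: it suffices to show that $|\nabla U(z)|^2(1-|z|^2)^p\,dA(z)$ is a $p$-Carleson measure with norm $\lesssim\|\lambda\|_p$. Since $|\nabla U|^2\approx|\bar\partial U|^2+|\partial U|^2=|g|^2+|\partial U|^2$ and $|g(z)|^2(1-|z|^2)^p\,dA(z)=d\lambda$ is $p$-Carleson by hypothesis, everything comes down to the measure $|\partial U(z)|^2(1-|z|^2)^p\,dA(z)$. Writing $\partial U$ from the integral formula for $U$, apart from the holomorphic correction factor (whose logarithmic derivative is controlled by the Carleson property of $|g|\,dA$) $\partial U$ is a Beurling-type singular transform of $g$, and one estimates $\int_{S(I)}|\partial U(z)|^2(1-|z|^2)^p\,dA(z)$ by splitting the integration in the auxiliary variable into a dilated box $S(CI)$ and its complement: the far contribution is bounded pointwise on $S(I)$ using the decay estimate of Lemma 2 together with the $p$-Carleson bound $\lambda(S(J))\lesssim\|\lambda\|_p|J|^p$, while the near contribution is handled by a weighted $L^2$-estimate for the singular transform on $S(CI)$ against $(1-|z|^2)^p\,dA$, using $\lambda(S(CI))\lesssim\|\lambda\|_p|I|^p$. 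This uniform-in-$I$ weighted bound for $\partial\circ\bar\partial^{-1}$ is, exactly as in [NX], the genuinely technical point, and it is where I expect the main work to lie.

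Granting (i), property (ii) should follow softly. Given $f\in Q_p$, take $Uf\in C^1(\mathbb{D})$ as a candidate extension of $vf$; its radial limits are $vf$ a.e. since $f\in Q_p\subset H^2$ has radial limits. Then for $z\in\mathbb{D}$ one has $|\nabla(Uf)(z)|^2(1-|z|^2)^p\lesssim\|U\|_{L^{\infty}(\mathbb{D})}^2|f'(z)|^2(1-|z|^2)^p+|f(z)|^2|\nabla U(z)|^2(1-|z|^2)^p$, and the first summand defines a $p$-Carleson measure because $f\in Q_p$ (Lemma 1) and $\|U\|_{L^{\infty}(\mathbb{D})}<\infty$. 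For the second summand, note that $d\nu:=|\nabla U(z)|^2(1-|z|^2)^p\,dA(z)$ is $p$-Carleson by (i); since both the $p$-Carleson condition and the $Q_p$-norm are Möbius invariant, pulling back to a fixed disk $\{|w|\le r_0\}$ by $\varphi_a$ with $1-|a|\approx|I|$ reduces $|I|^{-p}\int_{S(I)}|f|^2\,d\nu$ to the $Q_p$-Carleson embedding $\int_{\mathbb{D}}|h|^2\,d\mu\lesssim\|\mu\|_p\|h\|_{Q_p}^2$ valid for $p$-Carleson $\mu$ (see [X2]), applied to $h=f\circ\varphi_a$; this gives $\int_{S(I)}|f|^2\,d\nu\lesssim\|\nu\|_p\|f\|_{Q_p}^2|I|^p$. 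Hence $|\nabla(Uf)(z)|^2(1-|z|^2)^p\,dA(z)$ is a $p$-Carleson measure, and Lemma 6 yields $vf\in Q_p(\mathbb{T})$. In brief, once the weighted singular-integral estimate behind (i) is secured, (ii) is just Lemma 6 together with Lemma 1 and the conformally invariant Carleson embedding for $Q_p$.
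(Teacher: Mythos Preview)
Your overall plan --- take the Jones--type solution $U$ from [NX], then use Lemma~6 to reduce (i) to a $p$-Carleson estimate for $|\nabla U(z)|^2(1-|z|^2)^p\,dA(z)$ and (ii) to one for $|\nabla(Uf)(z)|^2(1-|z|^2)^p\,dA(z)$ --- is exactly the route the paper follows; for (i) the paper simply cites [NX], and your more detailed sketch of the weighted singular-integral bound on $\partial U$ is consistent with what is carried out there.

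The genuine gap is in your treatment of (ii). After the product-rule split you dispose of $|U|^2|f'|^2(1-|z|^2)^p\,dA$ correctly, but for the remaining term you appeal only to the fact that $d\nu=|\nabla U(z)|^2(1-|z|^2)^p\,dA(z)$ is $p$-Carleson and then invoke an embedding $\int_{\mathbb{D}}|h|^2\,d\mu\lesssim\|\mu\|_p\|h\|_{Q_p}^2$ for $p$-Carleson $\mu$. That embedding is false: take $f(z)=\log\frac{1}{1-z}\in Q_p$ and let $d\nu=(1-r)^{p-1}\,dr$ on the segment $[0,1)$. Then $\nu(S(I))\le |I|^p/p$ for every arc $I\subset\mathbb{T}$, yet for an arc of length $h$ centred at $1$ one has
\[
|I|^{-p}\int_{S(I)}|f|^2\,d\nu \;\approx\; \frac{1}{p}\,\Bigl(\log\frac{1}{h}\Bigr)^{2}\longrightarrow\infty\quad(h\to 0),
\]
so $|f|^2\,d\nu$ is not $p$-Carleson. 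Hence knowing only that $|\nabla U|^2(1-|z|^2)^p\,dA$ is $p$-Carleson cannot yield (ii). The paper instead invokes the proof of Theorem~1 in [P], which uses the explicit structure of the Jones solution (the pointwise form of $\nabla v$ in terms of $g$, together with the hypothesis that $|g(z)|^2(1-|z|^2)^p\,dA(z)$ is $p$-Carleson) to show directly that $|f(z)|^2|\nabla v(z)|^2(1-|z|^2)^p\,dA(z)$ is $p$-Carleson for every $f\in Q_p$. Your M\"obius-pullback step needs to be replaced by that finer analysis; the ``soft'' reduction you propose does not go through.
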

\begin{proof}
Reader can find the proof of (i) in [NX]. The solution $v$ taken there was
\begin{align*}
 v(z)=&\frac{i}{\pi} \int_D \frac{1-\vert \zeta \vert ^2}{\vert 1-\bar{\zeta} z\vert ^2}.\\
 &  \exp \left(\int_{ \vert w\vert \geq \vert \zeta\vert} \left( \frac{1+\bar{w} \zeta}{1-\bar{w} \zeta}-\frac{1+\bar{w} z}{1-\bar{w}z}\right)\vert g(w)\vert dA(w)\right) \vert g(\zeta)\vert dA(\zeta),
\end{align*} 
that has the same boundary values of   $zu(z)$.\\

For (ii), we need to show that $vf\in Q_p(\mathbb{T})\;\;  \text{for all}\;\; f\in Q_p$. To prove that $vf\in Q_p(\mathbb{T})$, by Lemma 6, it's enough to show that $$\vert \nabla (vf)(z)\vert ^2 (1-\vert z\vert ^2)^p dA(z)$$ is a $p$-Carleson measure.\\
Since $f\in Q_p$ and $v\in L^{\infty}(\mathbb{D})$, we have that $$\vert v(z)\vert^2 \vert \nabla f(z)\vert ^2 (1-\vert z\vert ^2)^p dA(z)$$ is a $p$-Carleson measure. So it remains to show that $$\vert f(z)\vert^2 \vert \nabla v(z)\vert ^2 (1-\vert z\vert ^2)^p dA(z)$$ is a $p$-Carleson measure.   Since $\vert g(z)\vert ^2\left(1-\vert z\vert ^2\right)^{p}dA(z)$ is a $p$-Carleson measure, we can see in the proof of Theorem 1 in  [P] that  $$\vert f(z)\vert^2 \vert \nabla v(z)\vert ^2 (1-\vert z\vert ^2)^p dA(z)$$ is a $p$- Carleson measure.\\ 
This completes the proof of Lemma 7.
\end{proof}
A similar result for  $1$-Carleson measure was proved in [G, P. 320-322].\\
\section{Proof of  Theorems }
First, by using the normal families, we will assume that the given family of functions are analytic in some neighborhood of $\overline{\mathbb{D}}$ and then reduce our theorems to the problem of solving certain inhomogoneous Cauchy-Riemann equations. Doing that will allow us to find  smooth solutions for both (4) and (6). Then we will convert our obtained smooth solutions into $H^{\infty}(\mathbb{D})\cap Q_p$ and  $\mathcal{M}(Q_p)$ - solutions, using some correction functions and applying the size conditions (3) and (5), respectively.
\subsection{Proof of Theorem 1}
Let, $ f_1, ..., f_n, g\in H^{\infty}(\mathbb{D})\cap Q_p$ such that they satisfy (3). Also, suppose  that $g, f_1,...f_n $ are analytic on $\overline{\mathbb{D}}$. Moreover, we assume $\vert \vert f_{j}\vert \vert \leq 1, \vert \vert g\vert \vert \leq 1.$
\smallskip

Set $$ \psi_{j}=\frac{g\; \overline{f_j}}{\sum_{l=1}^n |f_l|^2},  \;\; j=1,2,...,n.$$
 
Then, using (3), $\vert \psi_{j}\vert \leq 1$, and $C^{\infty}$ on $\overline{\mathbb{D}}$ and $$ \psi_{1}f_1+...+\psi_{n}f_n=g.$$
Suppose we can solve 
\begin{equation}
\frac{\partial{b_{j,k}}}{\partial { \overline{z}}}=g\psi_{j}\frac{\partial{\psi_{k}}}{\partial { \overline{z}}}=g^{3}G_{j,k}(z),\;\; 1\leq j ,k \leq n,
\end{equation}
with
\begin{equation}
b_{j,k}  \in L^{\infty}(\mathbb{T}) \cap Q_{p} (\mathbb{T})  .
\end{equation}
The difficulty, of course, is that $\psi_j(z)$ may not be analytic on $\mathbb{D}$. To rectify that, we write

$$g_j(z)=g^{2}(z)\psi_j(z)+\sum_{k=1}^n\left( b_{j,k}(z)-b_{k,j}(z) \right)f_k(z),$$
 then we get $$\sum_{j=1}^{n}g_{j}f_{j}=g^{2}\sum_{j=1}^{n} \psi_{j}f_{j}=g^3.$$ and also

$$\frac{\partial g_j}{\partial \bar{z}}=0.$$
Provided the solution of (7) satisfying (8), the functions $g_{j}$ are bounded analytic solutions for (4). Now, we will try to show that $g_{j}\in Q_p$. 

Since, $\frac{\partial \overline{f_l}}{\partial \bar{z}}=\overline{f'_l},$

$$\frac{\partial \psi_{j}}{\partial \bar{z}} =\frac{g\bar{f'_j}}{\sum_{l=1}^n \, |f_l|^2}-\frac{g\bar{f_{j}}\sum_{l=1}^{n}{f_{l}\bar{f'_{l}}}}{\left(\sum_{l=1}^n \,\vert{ f_l}\vert^2 \right)^2}
=\frac{g \sum_{l=1}^n f_{l}\left(\bar{f_l}\bar{f'_j}-\bar{f_k}\bar{f'_l}\right)}{\left(\sum_{l=1}^n \,\vert{ f_l}\vert^2 \right)^2}.$$ 
Thus, 
\begin{align*}
\left\vert \frac{\partial \psi_{j}}{\partial \bar{z}} \right\vert^2 \lesssim \frac { 2 \vert g\vert^2 \left(\sum \vert f_l \vert ^2\right)^2 \sum \vert f'_l \vert ^2}{\left(\sum_{l=1}^n \,\vert{ f_l}\vert^2 \right)^4}=&\frac { 2 \vert g\vert^2  \sum \vert f'_l \vert ^2}{\left(\sum_{l=1}^n \,\vert{ f_l}\vert^2 \right)^2}\\
& \leq \frac{2\sum \vert f'_l \vert ^2}{\sum_{l=1}^n \,\vert{ f_l}\vert^2}.  \;\; \left(\text{using (3)}\right)
\end{align*}

Similarly,
$$ \frac{\partial \psi_{j}}{\partial{z}}=\frac{g' \bar{f_j}}{\sum_{l=1}^n \, |f_l|^2}-\frac{g\bar{f_{j}}\sum_{l=1}^{n}{f'_{l}\bar{f_{l}}}}{\left(\sum_{l=1}^n \,\vert{ f_l}\vert^2 \right)^2}.$$
Therefore,
\begin{align*}
\left\vert \frac{\partial \psi_{j}}{\partial{z}}\right\vert^2& \lesssim \frac{\vert g'\vert^{2}\sum \vert f_{l} \vert^2}{\left(\sum_{l=1}^n \, \vert f_l\vert\right)^2 }+\frac{\vert g\vert^2 \left(\sum \vert f_l\vert^2 \right)^2 \sum \vert f'_l \vert ^2}{\left(\sum_{l=1}^n \,\vert{ f_l}\vert^2 \right)^4}\\
&\lesssim \frac{\vert g'\vert^2 }{\sum_{l=1}^n \, |f_l|^2}+\frac{\sum \vert f'_l \vert ^2}{\sum_{l=1}^n \,\vert{ f_l}\vert^2 }.
\end{align*}
Hence,
\begin{align*}
\left \vert \nabla \psi_j \right\vert^2&=2\left\vert \frac{\partial \psi_{j}}{\partial{z}}\right\vert^2+2\left\vert \frac{\partial \psi_{j}}{\partial \bar{z}}\right\vert^2\\
&\lesssim \frac{\vert g'\vert^2 }{\sum_{l=1}^n \, |f_l|^2}+\frac{\sum \vert f'_l \vert ^2}{\sum_{l=1}^n \,\vert{ f_l}\vert^2}.
\end{align*} 
Applying the size condition (3), we get that   $$\left \vert g^2 \;\; \nabla \psi_{j} \right\vert^2 \lesssim \vert g'\vert^2  \, +\sum {\vert f'_l \vert ^2 }.$$

Also, since $\psi_{j}$ are $C^{\infty}$ on $\overline{\mathbb{D}}$, by Lemma 6, to see that $g^{2}\psi_{j} \in Q_{p}(\mathbb{T})$, it is enough to show that $\left\vert \nabla (g^{2} \psi_{j} )(z)\right\vert^2 \left(1-\vert z \vert ^{2}\right)^{p}$ is a $p$-Carleson measure. Using the fact  $g, f_{k} \in Q_p$ , we get that for any Carleson box $S(I)$ on any interval $I$ of $\mathbb{T}$, 
\begin{align*}
\int_{S(I)}\vert  \psi _{j}(z)\nabla (g^2(z))\vert^2\left(1-\vert z\vert ^2\right)^{p}dA(z)& \lesssim \int _{S(I)}  \vert g'(z)\vert^2 \left(1-\vert z\vert^2\right)^{p} dA(z)\\
& \lesssim\vert I\vert^{p}.
\end{align*}
 And
\begin{align*}
\int_{S(I)}\vert  g^2(z) \nabla \psi _{j}(z)\vert^2\left(1-\vert z\vert ^2\right)^{p}dA(z)& \lesssim \int _{S(I)}  \vert g'(z)\vert^2 \left(1-\vert z\vert^2\right)^{p} dA(z)\\ & +\sum_{l=1}^n \int _{S(I)} \vert f'_{j}(z)\vert^2 \left(1-\vert z\vert^2\right)^{p}dA(z)\\
& \lesssim\vert I\vert^{p}.
\end{align*} 
Thus, $g^{2} \psi_{j}\in Q_{p}\left(\mathbb{T}\right).$ \
Also since $ f_1,...,f_n\in H^{\infty} \cap Q_p$, by (8) we find that $$\sum_{k=1}^n \left(b_{j,k}-b_{k,j}\right)f_{k}\in Q_{p}\left(\mathbb{T}\right).$$
This implies that $g_{j}f\in Q_{p}(\mathbb{T})$.\\
 Therefore,  the functions $g_j$ are bounded analytic whose boundary values lies on $Q_{p}(\mathbb{T})$. Hence, $g_j$ are the required solutions of (4).
  
Looking back at the above proof, we will be done if we can find a solution of $\bar{\partial}\;b_{j,k}=g\psi_{j}\bar{\partial}{\psi_k}$ satisfying (8). For this, it is enough to deal with an equation $\bar{\partial}(u)=G$, where $G=g\;\psi_j \overline\partial{\psi_k}$.\\
We have, 
\begin{align*}
\vert G\vert^2& \leq \vert g \vert^2 \; \vert \psi_{j} \vert^2 \;  \vert \overline\partial{\psi_k}\vert^2\\
& \lesssim \frac{\vert g\vert^2\;\sum \vert f'_l \vert ^2}{\sum_{l=1}^n \,\vert{ f_l}\vert^2}\leq \sum \vert f'_l \vert ^2.
\end{align*} \\

Since $f_l\in Q_p$, by Lemma 1, $\vert f'_l(z)\vert ^2 \left(1-\vert z\vert^2\right)^p dA(z)$ is a $p$-Carleson measure. Therefore, $\vert G(z)\vert ^2\left(1-\vert z\vert^2\right)^p dA(z)$ is a $p$ - Carleson measure.

 Hence, using Lemma 7, we obtain a solution $v\in Q_{p}(\mathbb{T})\cap L^{\infty}(\mathbb{T})$ of  $\bar{\partial}u=G$.
This completes the proof of Theorem 1.\\

To prove Theorem 2, we use arguments similar to those we used in Theorem 1, but the difference is finding the solutions in $\mathcal{M}(Q_p)$ for the given data on $\mathcal{M}(Q_p)$.

\subsection{Proof of Theorem 2}
Let, $ f_1, ..., f_n, g\in \mathcal{M}(Q_p)$ such that they satisfy (5) and  are analytic on $\overline{\mathbb{D}}$. Moreover, we assume $\vert \vert f_{j}\vert \vert \leq 1, \vert \vert g\vert \vert \leq 1.$
\medskip\

In this case, taking the $\psi_{j}$ as in Theorem 1, we will 
suppose that we can solve 
\begin{equation}
\frac{\partial{b_{j,k}}}{\partial { \overline{z}}}=g\psi_{j}\frac{\partial{\psi_{k}}}{\partial { \overline{z}}}=g^{3}G_{j,k}(z),\;\; 1\leq j ,k \leq n,
\end{equation}
satisfying 
\begin{equation}
b_{j,k}  \in L^{\infty}(\mathbb{T}) \cap Q_{p} (\mathbb{T})\;\;  \text{with}\;\;b_{j,k}f  \in Q_{p} (\mathbb{T}) \;\; \text{for all}\;\; f\in Q_p.
\end{equation}\\
Hence, 
$$g_j(z)=g^{2}(z)\psi_j(z)+\sum_{k=1}^n\left( b_{j,k}(z)-b_{k,j}(z) \right)f_k(z)$$ are bounded analytic on $\mathbb{D}$ and also  $$\sum_{j=1}^{n}g_{j}f_{j}=g^{2}\sum_{j=1}^{n} \psi_{j}f_{j}=g^3.$$ 
So the functions $g_{j}$  are analytic solutions for (6). Now, we will try to show that $g_{j}\in \mathcal{M}\left(Q_p\right)$. For this, our aim is to show that ${g_{j}f}\in Q_{p} $
for all $f\in Q_{p}.$ \\ But,  $Q_{p}=Q_{p}(\mathbb{T})\cap{H}^{2}(\mathbb{D})$, it's sufficient to show that $g_{j} f \in Q_{p}(\mathbb{T})$ for all $f\in Q_{p}$ .

Also, since $\psi_{j}$  are $C^{\infty}$ on $\overline{\mathbb{D}}$, by Lemma 6, to see that $g^{2}\psi_{j} f \in Q_{p}(\mathbb{T})$, it is enough to show that $\left\vert \nabla (g^{2} \psi_{j}f )(z)\right\vert^2 \left(1-\vert z \vert ^{2}\right)^{p}$ is a $p$ - Carleson measure.\\

Using the facts that $g, f_{k} \in \mathcal{M}(Q_p)$, 

$$\left \vert \nabla \psi_j \right\vert^2
\lesssim \frac{\vert g'\vert^2 }{\sum_{l=1}^n \, |f_l|^2}+\frac{\sum \vert f'_l \vert ^2}{\sum_{l=1}^n \,\vert{ f_l}\vert^2}$$ and the size condition (5),  for any Carleson box  $S(I)$ based on any interval $I$ of $\mathbb{T}$, we have that
\begin{align*}
\int_{S(I)}\vert {g^2(z)} \psi _{j}(z)\nabla f(z)\vert^2\left(1-\vert z\vert ^2\right)^{p}dA(z)& \lesssim \int _{S(I)} \vert f'(z)\vert^2 \left(1-\vert z\vert^2\right)^{p} dA(z)\\
& \lesssim\vert I\vert^{p}.
\end{align*}

\begin{align*}
\int_{S(I)}\vert {f(z)} \psi _{j}(z)\nabla g^2(z)\vert^2\left(1-\vert z\vert ^2\right)^{p}dA(z)& \lesssim \int _{S(I)} \vert f(z)\vert^2 \vert g'(z)\vert^2 \left(1-\vert z\vert^2\right)^{p} dA(z)\\
& \lesssim\vert I\vert^{p}.
\end{align*}
 And
\begin{align*}
\int_{S(I)}\vert {f(z)} \nabla \psi _{j}(z) g^2(z)\vert^2\left(1-\vert z\vert ^2\right)^{p}dA(z)& \lesssim \int _{S(I)} \vert f(z)\vert^2 \vert g'(z)\vert^2 \left(1-\vert z\vert^2\right)^{p} dA(z)\\ & +\sum_{l=1}^n \int _{S(I)} \vert f(z)\vert^2 \vert f'_{j}(z)\vert^2 \left(1-\vert z\vert^2\right)^{p}dA(z)\\
& \lesssim\vert I\vert^{p}.
\end{align*} 
Therefore, $(g^{2} \psi_{j})f\in Q_{p}\left(\mathbb{T}\right)$.\\

Also, since $ f_1,...f_n\in \mathcal{M}\left(Q_p\right)$, by (10) we find that $$\sum_{k=1}^n \left(b_{j,k}-b_{k,j}\right)f_{k}f\in Q_{p}\left(\mathbb{T}\right).$$
This implies that $g_{j}f\in Q_{p}$. Therefore the functions $g_j$ are in $\mathcal{M}\left(Q_p\right)$.
\medskip\
Now, it remains to show that we can find a solution of (9) satisfying (10). For this, we have that  
 $$G(z)=g\;\psi_j \overline\partial{\psi_k},$$ which satisfies 
\begin{align*}
\vert G\vert^2& \leq \vert g \vert^2 \; \vert \psi_{j} \vert^2 \;  \vert \overline\partial{\psi_k}\vert^2\\
& \lesssim \frac{\vert g\vert^2\;\sum \vert f'_l \vert ^2}{\sum_{l=1}^n \,\vert{ f_l}\vert^2}\leq \sum \vert f'_l \vert ^2.
\end{align*} 
Hence, $\vert G(z)\vert ^2 (1-\vert z\vert^2 )^p$ is a $p$-Carleson measure. Now, applying the second part of  Lemma 7, we get the required solution of (9). \\
This completes the proof of Theorem 2.

\section{Remarks and Questions}
\subsection{Wolff's Theorem for Infinite Number of  Generators}\
Since we took only the finite number of generators, we were able to show that $\vert G(z)\vert^2 \left(1-\vert z\vert^2\right)^p dA(z)$ is a $p$-Carleson measure. But, we do not know up to this point whether this can be done  taking infinite number of generators, because the p-Carleson constant may depend on n. It would be interesting to see if one can generalize both Theorems 1 and  2 for any number of generators.\
 \subsection{Generalized Ideal Problem on $Q_p$ Spaces}
 We proved  that the size conditions (3) and (5) imply $g^3 \in \mathcal{I}(f_1,...,f_n)$ in $ H^{\infty}(\mathbb{D})\cap Q_p$  and in $\mathcal{M}(Q_p)$. Several authors (for example, Treil[T2], Trent[Tr1],..) have given sufficient conditions for $g\in \mathcal{I}(f_1,...,f_n)$ in $H^{\infty}(\mathbb{D})$. It opens up the problem of whether a single sufficient condition can generalize the ideal problem.     \\

\medskip
Acknowledgement:  The author would like to thank the referee for the careful review and providing important comments. The author also would like to  thank Dr. J. Xiao, Dr. J. Pau, Dr. A. Nicolau, and Dr. Z. Wu for having  useful conversations on different properties of $Q_p$ spaces.

\bigskip

\end{document}